\newtheorem{thm}{Theorem}
\newtheorem{rem}{Remark}
\newtheorem{cor}{Corollary} 
\theoremstyle{definition}
\newtheorem{df}{Definition}
\def\Xint#1{\mathchoice
	{\XXint\displaystyle\textstyle{#1}}%
	{\XXint\textstyle\scriptstyle{#1}}%
	{\XXint\scriptstyle\scriptscriptstyle{#1}}%
	{\XXint\scriptscriptstyle\scriptscriptstyle{#1}}%
	\!\int}
\def\XXint#1#2#3{{\setbox0=\hbox{$#1{#2#3}{\int}$ }
		\vcenter{\hbox{$#2#3$ }}\kern-.6\wd0}}
\def\dashint{\Xint-}
\title{The Bushell-Okrasi\'nski inequality}
\author{\L ukasz P\l ociniczak\thanks{Faculty of Pure and Applied Mathematics, Wroc{\l}aw University of Science and Technology, Wyb. Wyspia{\'n}skiego 27, 50-370 Wroc{\l}aw, Poland}$\;^,$\footnote{Email: lukasz.plociniczak@pwr.edu.pl}}
\date{\textit{Dedicated to the memory of \\ Peter J. Bushell (1934-2020) and Wojciech Okrasi\'nski (1950-2020)}}
\begin{document}
	\maketitle
	
	\begin{abstract}
		We present an expository account of the Bushell-Okrasi\'nski inequality, the motivation behind it, its history, and several generalizations. This inequality originally appeared in studies of nonlinear Volterra equations but very soon gained interest of its own. The basic result has quickly been generalized and extended in different directions strengthening the assertion, generalizing the kernel and nonlinearity, providing the optimal prefactor, finding conditions under which it becomes an equality, and formulating variations valid for other than Lebesgue integrals. We review all of these aspects. \\
		
		\noindent\textbf{Keywords}: Bushell-Okrasi\'nski inequality, reversed Jensen inequality, nonlinear Volterra equations\\
		
		\noindent\textbf{AMS Classification}: 26D15, 45D05 
	\end{abstract}

\section{Introduction}
Analysis is full of integral inequalities of many types and utility. Every young adept of the art has to learn and efficiently use results of Cauchy, Schwarz, H\"older, Jensen, Minkowski, Young, Sobolev, Poincar\'e, Friedrichs, Hardy, and Chebyshev to name only a few classics. There is another type of inequality that resides somewhere between H\"older's and Jensen's. A result that can be thought as a strengthening of the Chebyshev inequality. The Bushell-Okrasi\'nski inequality, which in one of its basic forms for positive and increasing $f$ can be stated as
\begin{equation}
	\int_0^x (x-s)^{\alpha-1} f(s)^\alpha ds \leq \left(\int_0^x f(s)ds\right)^\alpha, \quad \alpha \geq 1,
\end{equation}
has been discovered in 1990 by Peter Bushell and Wojciech Okrasi\'nski and published in their work on nonlinear Volterra integral equations \cite{Bus90}. The above result has quickly been included in Bullen's "Dictionary of inequalities" (\cite{Bul15}, p.35). Subsequently, many authors proceeded to investigate it further by relaxing assumptions, strengthening the claim, finding optimal constants, generalizing to other than power type kernels and nonlinearities, and translating it to fuzzy integrals. In this paper we will look closely on historical development of this inequality and review some of its generalizations.

This review is structured as follows. First, we give some motivations behind Bushell-Okrasi\'nski inequality and present its original proof. Then, we discuss Wolfang Walter's conjectures and their resolution in a joint work with Weckesser. Additionally, we present several different generalizations of the original result. We end the paper with a short detour into the land of fuzzy integrals which also can enjoy some types of Bushell-Okrasi\'nski inequality. 

\section{The original Bushell-Okrasi\'nski inequality}
The main motivation behind the original Bushell-Okrasi\'nski inequality was a study of nonlinear Volterra equations of the form
\begin{equation}
\label{eqn:NonlinearVolterra}
	u(x) = \int_0^x k(x-s) g(u(s)) ds,
\end{equation}
that arise in many important applications in porous media \cite{Kin88,Kne77} or shocks \cite{Kel81}. It is instructive to make a trip to the field of hydrology and see how the above Volterra integral equation can appear as a model of moisture imbibition. Suppose that an initially dry and half-infinite porous medium is subjected to water at $x = 0$. Then, in the absence of gravity, the capillary action is the only factor that drives the evolution of the moisture $\theta=\theta(x,t)$ (that is, the percentage of representative volume filled with water). The mass conservation gives then the nonlinear diffusion equation known as the Richards equation with the nonlinear diffusivity $D(\theta)$ \cite{Bea13},
\begin{equation}
\begin{cases}
	\dfrac{\partial\theta}{\partial t} = \dfrac{\partial}{\partial x}\left(D(\theta) \dfrac{\partial \theta}{\partial x} \right), & x > 0, \quad t > 0 \\
	\theta(x,0) = 0, \\
	\theta(0,t) = 1, & \lim\limits_{x\rightarrow 0^+} -D(\theta) \dfrac{\partial \theta}{\partial x} = 0,
\end{cases}
\end{equation}
where for simplicity we have chosen the appropriate physical units so that the resulting problem is nondimensional. Notice the no-flux boundary condition that tells us that no water is being injected into the medium - capillary forces do all the work. It is natural, both theoretically and experimentally, to seek for the self-similar solutions of the above in the form $\theta(x,t) = v(\eta)$ with $\eta := x/\sqrt{t}$. This gives an ordinary differential equation
\begin{equation}
	-\frac{1}{2}\eta v' = (D(\theta) v')', \quad v(0) = 0, \quad (\cdot)' := \frac{d}{d\eta}.
\end{equation}
Now, we can integrate the above over $[0,\eta]$ to obtain
\begin{equation}
	\frac{1}{2}\int_0^\eta v(s)ds = D(v(\eta))v'(\eta) - D(v(0))v'(0) = D(v(\eta))v'(\eta),
\end{equation}
where the second term on the right vanishes due to the no-flux condition. Further, we can define the primitive of the diffusivity $D$
\begin{equation}
	G(u) := \int_0^u D(s) ds,
\end{equation}
and with the help of which we have $D(v)v' = (G(v))'$. Therefore, a second integration yields
\begin{equation}
	G(v(\eta)) = \frac{1}{2}\int_0^\eta (\eta-s) v(s)ds.
\end{equation}
Finally, since $D$ is positive there is a well-defined inverse $g = G^{-1}$. Setting $v = g(u)$ brings us to (\ref{eqn:NonlinearVolterra}) with a generalized kernel. For example, weakly singular kernels $k(s) \propto s^{\alpha-1}$ for some $\alpha\in (0,1)$ can arise in the study of anomalous diffusion \cite{Plo14,Plo15}.  

The flagship example of nonlinearity is the root function $g(u) = u^{1/p}$ for some $p>1$ that models the diffusivity of many porous media (this is the Brooks-Correy model of moisture transport in soil \cite{Bro64}). As can be easily observed, in this case the integral equation (\ref{eqn:NonlinearVolterra}) has a trivial solution $u\equiv 0$. More generally, for a nonlinearity satisfying $g(0) = 0$ the trivial solution is always present. However, when $g$ is \emph{non-Lipschitz} then a non-trivial solution might exist (the Lipschitz condition rules out this case). Investigating these solutions is the main objective of Bushell's and Okrasi\'nski's paper \cite{Bus90} as well as several other authors throughout the last decades (for ex. \cite{Gri81,Nie97,Myd91,Rob98,Okr91,Bus86,Bus76,Rob96,Bru04}). For a thorough review on this account in view of numerical methods the Reader is referred to \cite{Ari19}. For the aforementioned root-type nonlinearity $g(u) = u^{1/p}$ one can easily verify that a non-trivial solution of (\ref{eqn:NonlinearVolterra}) with a kernel $k(s) = s^{\alpha-1}$ and some $\alpha > 0$ is the power function
\begin{equation}
	u(x) = x^{\frac{\alpha p }{p-1}} B\left(\alpha, 1+\frac{\alpha }{p-1}\right)^\frac{p}{p-1},
\end{equation}
where $B(\cdot, \cdot)$ is the Euler beta function. There are many approaches that find the necessary and sufficient conditions on $k$ and $g$ for which (\ref{eqn:NonlinearVolterra}) has non-trivial solutions and reviewing all of them would take us too far from the main theme of this paper. However, we shortly note that Bushell's and Okrasi\'nski's approach is to use the monotone iteration method with sub- and supersolutions defined in functional cones (see for ex. \cite{Zei12}). In the main argument authors construct an \emph{explicit} solution of the following associated integral equation
\begin{equation}
	w(x) = \int_0^x g(w(s)^\alpha)^\frac{1}{\alpha} ds,
\end{equation}
and show that it can exists if and only if $1\leq \alpha < \alpha_c$ for some critical value $\alpha_c$. This assertion is then carried over to the case of (\ref{eqn:NonlinearVolterra}) and the crucial link between these two nonlinear Volterra equations is supplemented by the Bushell-Okrasi\'nski inequality.

In many talks between the author and W. Okrasi\'nski he always stressed that the inequality was just a "passing auxiliary lemma" needed to show necessary conditions for existence. Originally, W. Okrasi\'nski did not realize that it can have a value of its own. This is probably the reason that he together with P. Bushell did not try to polish the result and provide a stronger assertion. This was later done by other authors, and to some extent by Bushell, to what we will turn in next sections. Now, we present the original proof of Bushell-Okrasi\'nski inequality\footnote{In this review we will state all important results as "Theorems" in contrast, for example, with \cite{Bus90} where the BO inequality is denoted as "Lemma".}\\

\begin{thm}[The original Bushell-Okrasi\'nski inequality, Lemma 2 in \cite{Bus90}]
Let $f\in C[0,X]$, $0<X\leq 1$, be a non-decreasing and non-negative function. If $\alpha\geq 1$, then
\begin{equation}
\label{eqn:BOOriginal}
	\int_0^x (x-s)^{\alpha-1} f(s) ds \leq \left(\int_0^x f(s)^\frac{1}{\alpha}ds\right)^\alpha. 
\end{equation}
\end{thm}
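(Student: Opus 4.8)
The plan is to reduce the statement to a weighted estimate on the fixed interval $[0,1]$ by scaling, and then to exploit monotonicity through a single pointwise bound. First I would set $g := f^{1/\alpha}$, which is again non-negative, non-decreasing and continuous, so that (\ref{eqn:BOOriginal}) takes the homogeneous form $\int_0^x (x-s)^{\alpha-1} g(s)^\alpha\,ds \le \left(\int_0^x g(s)\,ds\right)^\alpha$. Substituting $s = xt$ in both integrals produces a factor $x^\alpha$ on each side, which cancels for $x>0$; writing $\psi(t) := g(xt)$ (still non-negative and non-decreasing on $[0,1]$), the whole family of inequalities over $x\in(0,X]$ collapses to the single normalized inequality
\[
\int_0^1 (1-t)^{\alpha-1}\psi(t)^\alpha\,dt \le \left(\int_0^1 \psi(t)\,dt\right)^\alpha .
\]

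The crucial observation — the only place where monotonicity is used — is the averaged pointwise bound $(1-t)\psi(t) \le \int_0^1 \psi(\tau)\,d\tau =: m$. This holds because $(1-t)\psi(t) = \int_t^1 \psi(t)\,d\tau \le \int_t^1 \psi(\tau)\,d\tau \le \int_0^1 \psi(\tau)\,d\tau$, using $\psi(t)\le\psi(\tau)$ for $\tau\ge t$ and then discarding the non-negative tail on $[0,t]$. With this in hand I would factor the integrand as $(1-t)^{\alpha-1}\psi(t)^\alpha = \big[(1-t)\psi(t)\big]^{\alpha-1}\psi(t)$ and estimate the bracket: since $0 \le (1-t)\psi(t) \le m$ and $x\mapsto x^{\alpha-1}$ is non-decreasing on $[0,\infty)$ precisely because $\alpha\ge 1$, we obtain $\big[(1-t)\psi(t)\big]^{\alpha-1}\le m^{\alpha-1}$. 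Integrating then yields $\int_0^1 (1-t)^{\alpha-1}\psi^\alpha\,dt \le m^{\alpha-1}\int_0^1 \psi\,dt = m^{\alpha-1}\cdot m = m^\alpha$, which is exactly the normalized inequality.

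The argument needs no H\"older inequality, no convexity, and no differentiability of $f$; continuity merely guarantees that the integrals are finite (the weight $(1-t)^{\alpha-1}$ is integrable for every $\alpha>0$). The only genuine obstacle here is conceptual rather than technical: spotting the correct normalization together with the factorization $\big[(1-t)\psi\big]^{\alpha-1}\psi$, after which the estimate is immediate. It is instructive that the hypothesis $\alpha\ge 1$ enters in exactly one spot, through the monotonicity of $x\mapsto x^{\alpha-1}$ — for $\alpha<1$ this map decreases and the bound reverses, which is precisely why the inequality is special to $\alpha\ge 1$. I would also remark that the scaling step uses only $x>0$, so the restriction $X\le 1$ is not actually needed for this particular statement; it is inherited from the Volterra setting in which the lemma was originally required.
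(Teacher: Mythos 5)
Your proof is correct, and it takes a genuinely different route from the paper's. The paper reproduces Bushell and Okrasi\'nski's original three-stage argument: an induction on integer exponents (differentiating the quantity $I_n$ and using monotonicity of $f$), which actually yields the stronger inequality (\ref{eqn:BOOrginalStronger}) with the factor $n$; then H\"older interpolation to reach rational $\alpha$; then a density argument for real $\alpha$. You instead exploit the exact scale invariance of the power kernel and power nonlinearity to collapse the whole family of inequalities to the normalized one on $[0,1]$, and finish with the single pointwise estimate $(1-t)\psi(t)\le\int_t^1\psi(\tau)\,d\tau\le m:=\int_0^1\psi(\tau)\,d\tau$ combined with the factorization $(1-t)^{\alpha-1}\psi(t)^\alpha=\left[(1-t)\psi(t)\right]^{\alpha-1}\psi(t)$. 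This treats all real $\alpha\ge1$ in one stroke, needs no induction, no H\"older, no density, and --- as you correctly observe --- shows that the restriction $X\le1$ is superfluous, a fact the paper only recovers later via Walter and Weckesser's Theorem \ref{thm:Walter}. What your method gives up is the constant: for constant $\psi\equiv c$ your estimate bounds $\int_0^1(1-t)^{\alpha-1}c^\alpha\,dt=c^\alpha/\alpha$ by $c^\alpha$, i.e.\ it is lossy by exactly a factor $\alpha$, so as it stands it cannot yield the sharpened inequality (\ref{eqn:BOStronger}) (Walter's conjecture), whereas the paper's inductive step does produce the sharp factor $n$ for integer exponents --- precisely the observation that motivated the later developments the paper goes on to review.
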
 
\begin{proof}
Our aim is to show (\ref{eqn:BOOriginal}) first for natural $\alpha$ by mathematical induction, then by H\"older inequality extend the result to all rational numbers, and finally by density argument arrive at $\alpha$ real. Fix $f$ as in the assumptions and define for $n\in\mathbb{N}$ let 
\begin{equation}
\label{eqn:BOIn}
	I_n(x) := \left(\int_0^x f(s)ds\right)^n - n \int_0^x (x-s)^{n-1} f(s)^n ds. 
\end{equation}
We can compute the derivative of the above quantity 
\begin{equation}
\begin{split}
	I'_n(x) &= n f(x) \left(\int_0^x f(s)ds\right)^{n-1} - n(n-1) \int_0^x (x-s)^{n-2} f(s)^n ds \\
	&= n f(x) I_{n-1}(x) + n(n-1) \int_0^x \left(f(x)-f(s)\right)(x-s)^{n-2} f(s)^{n-1} ds,
\end{split}
\end{equation}
which is valid for all $n \geq 1$. For $n=1$ we trivially have $I_1(x) = 0$, while for the next step
\begin{equation}
	I'_2(x) = 2 \int_0^x \left(f(x)-f(s)\right) f(s) ds \geq 0,
\end{equation}
because $f$ is non-decreasing. Therefore, $I_2(x) \geq I_2(0) = 0$. Now, we assume that $I_{n-1}(x) \geq 0$ for $n > 2$. We immediately have $I_n(x) \geq 0$ since manifestly $I'_n(x) \geq 0$ due to inductive assumption and non-decreasing of $f$. Therefore, $I_n(x)\geq 0$ for all $n\in\mathbb{R}$, that is
\begin{equation}
\label{eqn:BOOrginalStronger}
	n \int_0^x (x-s)^{n-1} f(s)^n ds \leq \left(\int_0^x f(s)ds\right)^n,
\end{equation}
which implies (\ref{eqn:BOOriginal}) for $\alpha = n$ with $f$ replaced by $f^{1/n}$ which is also positive and non-decreasing. 

Now, let us fix $p>1$ and take the conjugate exponent $q^{-1} = 1 - p^{-1}$. By H\"older inequality we have
\begin{equation}
	\int_0^x (x-s)^{\frac{n-1}{p}} f(s)^\frac{n}{p}f(s)^\frac{1}{q} \leq \left(\int_0^x (x-s)^{n-1}f(s)^n ds\right)^\frac{1}{p} \left(\int_0^x f(s) ds\right)^\frac{1}{q}.
\end{equation}
If we now put $\alpha = 1+ (n-1)/p$, due to arbitrariness of $p$ we obtain (\ref{eqn:BOOriginal}) for any $\alpha\in\mathbb{Q}$. From the density of rational numbers in $\mathbb{R}$ we obtain the Bushell-Okrasi\'nski inequality for all real $\alpha \geq 1$. This concludes the proof.
\end{proof}

\begin{rem}
The original result, that is Lemma 2 in \cite{Bus90} also contains the following inequality 
\begin{equation}
	\left(\frac{\beta-\alpha}{\beta-1}\right)^{\beta-1} \left(\int_0^x f(s)^\frac{1}{\beta}ds\right)^\beta \leq \int_0^x (x-s)^{\alpha-1} f(s)ds,
\end{equation}
for $\beta > \alpha$ without the requirement on $f$ to be non-decreasing. However, the proof is a simple consequence of H\"older's inequality and thus we omit it here. This justifies the claim that (\ref{eqn:BOOriginal}) is a \emph{reverse H\"older type inequality} (see \cite{Bar97}). 
\end{rem}

We close this section with some remarks concerning nonlocal operators, in particular fractional integrals. For a comprehensive treatment of this subject the Reader is invited to consult \cite{Kil06}. The notion of generalizing derivatives to not necessarily integer order has been present in mathematics since the beginning of the calculus itself (an interesting historical account can be found in \cite{Mil93}). Many different approaches have been undertaken and culminated in the definition of the Riemann-Liouville fractional integral and derivative.

\begin{df}
The \emph{Riemann-Liouville fractional integral} of order $\alpha>0$ of a locally integrable function $f: [0, X] \mapsto \mathbb{R}$ is given by
\begin{equation}
	I^\alpha_a f(x) = \frac{1}{\Gamma(\alpha)} \int_a^x (x-s)^{\alpha-1} f(s) ds.
\end{equation}
Moreover, let $n = [\alpha]$. Then the \emph{Riemann-Liouville fractional derivative} of order $\alpha$ is defined by
\begin{equation}
	D^\alpha_a f(x) = \frac{d^n}{dx^n} I^{n-\alpha}_a f(x) = \frac{1}{\Gamma(n-\alpha)} \frac{d^n}{dx^n} \int_a^x (x-s)^{n-\alpha-1} f(s) ds.
\end{equation}
\end{df}

We immediately can notice the similarity of the fractional integral and the left-hand side of Bushell-Okrasi\'nski inequality. From (\ref{eqn:BOOriginal}) we obtain for $0<x\leq X\leq 1$ and $f$ positive non-decreasing,
\begin{equation}
	I^\alpha_0 f(x) \leq \frac{1}{\Gamma(\alpha)}\left(\int_0^x f(s)^\frac{1}{\alpha} ds\right)^\alpha \leq \frac{1}{\Gamma(\alpha)} \left(\int_0^1 f(s)^p ds\right)^\frac{1}{p}, \quad 1\leq p\leq \infty,
\end{equation}
which follows from H\"older's inequality. Therefore, Bushell-Okrasi\'nski inequality implies that the fractional integral is a bounded linear operator for such functions (for a general $L^p(0,1)$ space see \cite{Kil06}, Lemma 2.1). In fact, we will see below in (\ref{eqn:BOStronger}) that the optimal constant in the bound is equal to $1/\Gamma(1+\alpha)$. 

\section{Improvements and generalizations}

\subsection{Walter's conjectures}
As we have mentioned above, authors of the original 1990 paper \cite{Bus90} treated the Bushell-Okrasi\'nski inequality as a side lemma that was needed to investigate the non-trivial solutions of the nonlinear Volterra integral equation (\ref{eqn:NonlinearVolterra}). Very soon, in fact almost immediately after the publication, the inequality gained some attention in mathematical community. In December 1990 during the 6th International Conference on General Inequalities in Oberwolfach, Germany, Wolfgang Walter posed two conjectures related to strengthening of the original inequality (the proceedings \cite{Wal92} appeared in 1992). The first one is based on an observation that in the original proof a stronger result is obtained for $\alpha\in\mathbb{N}$ as in (\ref{eqn:BOOrginalStronger}) where the factor $n$ appears on the left-hand side. Then, in the proof for $\alpha\in\mathbb{Q}$ this fact is used only partially and the stronger inequality is lost. Walter asked whether it is possible to find a simpler proof of Bushell-Okrasinski inequality with improved assertion that for all $\alpha \geq 1$ we have
\begin{equation}
\label{eqn:BOStronger}
	\alpha \int_0^x (x-s)^{\alpha-1} f(s)^\alpha ds \leq \left(\int_0^x f(s) ds\right)^\alpha.
\end{equation}
It is also natural to ask whether the above is satisfied for $0<\alpha<1$ provided the function $f$ is nonnegative and decreasing. Additionally, the question arises whether the assumption that $f$ is defined on an interval $[0,X]$ with $X\leq 1$ is necessary at all. All of these claims has been successfully proved in a joint paper with V. Weckesser published in 1993 (the paper was submitted in November 1991 and revised in August 1992). In fact these authors proved much more general result with a proof that is based on approximations by step functions and Monotone Convergence Theorem. 

\begin{thm}[Generalized Bushell-Okrasinski inequality, Theorem 1 in \cite{Wal93}]
\label{thm:Walter}
Suppose that $f:[0,X] \mapsto [0,\infty)$ for $X > 0$, $g:[0,\infty)\mapsto [0,\infty)$, and $k \in L^1[0,X]$. Define 
\begin{equation}
\label{eqn:WalterCondition}
	K(x) := \int_0^x k(s) ds, \quad h_c(y) := g( cy) - K(c) g(y), 
\end{equation}
for $0<c\leq X$. Then, if either
\begin{equation}\tag{I}
\label{eqn:CondI}
	f \text{ is non-decreasing},\quad g \text{ is convex},\quad h_c \text{ is nonnegative and non-decreasing},
\end{equation}
or
\begin{equation}\tag{II}
\label{eqn:CondII}
	f \text{ is non-increasing},\quad g \text{ is concave},\quad h_c \text{ is nonnegative and non-increasing},
\end{equation} 
the following generalized Bushell-Okrasi\'nski inequality is satisfied
\begin{equation}
\label{eqn:BOGeneralized}
	\int_0^x k(x-s) g(f(s)) ds \leq g\left(\int_0^x f(s)ds\right), \quad 0<x\leq X.
\end{equation}
\end{thm}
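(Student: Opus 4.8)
The plan is to follow the two-stage route indicated in the statement: prove the inequality for step functions by a direct computation, then recover general $f$ by monotone approximation together with a convergence theorem. Since $g$ is continuous on $(0,\infty)$ (being convex in case (\ref{eqn:CondI}) or concave in case (\ref{eqn:CondII})) and since $\int_0^x f_m \to \int_0^x f$ for a sequence of step functions $f_m$ increasing to $f$, the right-hand side of (\ref{eqn:BOGeneralized}) passes to the limit by continuity of $g$, while the left-hand side passes by monotone (or dominated) convergence using $k\in L^1[0,X]$. Thus the entire content sits in the step-function case, which I would treat in detail for case (\ref{eqn:CondI}); case (\ref{eqn:CondII}) runs on the same bookkeeping with every monotonicity reversed.

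So fix a non-decreasing step function taking values $0\le a_1\le\dots\le a_n$ on the successive intervals of a partition $0=x_0<\dots<x_n=x$, and set $c_i:=x-x_{i-1}$, a decreasing sequence with $c_1=x$ and $c_{n+1}=0$. A change of variables turns the left-hand side into $\sum_{i=1}^n g(a_i)\,[K(c_i)-K(c_{i+1})]$, and summation by parts rewrites this as $g(a_1)K(x)+\sum_{i=2}^n [g(a_i)-g(a_{i-1})]\,K(c_i)$. The base case $n=1$ is precisely $g(a_1)K(x)\le g(a_1 x)$, i.e. $h_x(a_1)\ge 0$, so the nonnegativity hypothesis on $h_c$ supplies it immediately and identifies the role of that assumption.

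For the inductive bookkeeping I would bring in the two remaining hypotheses. Monotonicity of $h_{c_i}$, applied to the ordered pair $a_{i-1},a_i$, yields (in both cases) the pairwise bound $K(c_i)\,[g(a_i)-g(a_{i-1})]\le g(c_i a_i)-g(c_i a_{i-1})$, while $h_x(a_1)\ge 0$ gives $g(a_1)K(x)\le g(c_1 a_1)$; together these show the left-hand side is at most $g(c_1 a_1)+\sum_{i=2}^n [g(c_i a_i)-g(c_i a_{i-1})]$. A parallel summation by parts evaluates $\int_0^x f$ as $c_1 a_1+\sum_{i=2}^n c_i(a_i-a_{i-1})$; writing $\delta_i:=c_i(a_i-a_{i-1})$ and the partial sums $S_1:=c_1 a_1$, $S_i:=S_{i-1}+\delta_i$, it then remains to prove the termwise telescoping comparison $g(c_i a_i)-g(c_i a_{i-1})\le g(S_i)-g(S_{i-1})$, since adding $g(S_1)=g(c_1a_1)$ and telescoping collapses the right side to $g(S_n)=g\!\left(\int_0^x f\right)$.

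The termwise comparison is where I expect the real difficulty to lie, since it is exactly the point at which convexity and all the sign conditions must be made to cooperate. Both sides are increments of $g$ over the same signed step $\delta_i$, one based at $c_i a_{i-1}$ and the other at $S_{i-1}$, so convexity of $g$ (non-decreasing difference quotients) reduces the claim to the single ordering $S_{i-1}\ge c_i a_{i-1}$. To settle this I would re-expand $S_{i-1}$ by Abel summation as $\sum_{j=1}^{i-2} a_j(c_j-c_{j+1})+a_{i-1}c_{i-1}$; every term is nonnegative because the $c_j$ decrease and the $a_j$ are nonnegative, and $a_{i-1}c_{i-1}\ge a_{i-1}c_i$, which gives precisely $S_{i-1}\ge c_i a_{i-1}$ and closes the step-function case. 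For case (\ref{eqn:CondII}) the identical computation applies: the only changes are that the $a_i$ now decrease, that $h_c$ being non-increasing produces the same pairwise bound, and that the concavity of $g$ reverses the monotonicity of difference quotients in exactly the way needed to offset the now-negative steps $\delta_i$, while the ordering $S_{i-1}\ge c_i a_{i-1}$ survives unchanged, its proof using only $a_j\ge 0$ and the monotonicity of the $c_j$.
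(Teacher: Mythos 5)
Your proposal is correct and is essentially the paper's own proof: the paper reduces to step functions via monotone convergence and then inducts on the number of steps, and your Abel-summation/telescoping argument is exactly that induction unrolled --- your partial sums $S_i$ coincide with the paper's intermediate integrals $I_i$, your pairwise bound from monotonicity of $h_{c_i}$ is the paper's estimate of $\Delta L_{n+1}$, and your convexity-plus-base-ordering step $S_{i-1}\ge c_i a_{i-1}$ is precisely the paper's use of ``trivially $a_n c \le I_n$''. The only differences are cosmetic: you work out case (\ref{eqn:CondII}) explicitly where the paper says ``similar'', and your write-up states the base case and the convexity comparison with the correct signs, whereas the paper's text contains two apparent typos (its $h_{a_1}(x)\ge 0$ should read $h_x(a_1)\ge 0$ with $R_1-L_1\ge 0$, and its $\Delta R_{n+1}\le g(c(a_n+y))-g(ca_n)$ should be $\ge$, which is what the final chain actually requires).
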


Before we proceed to the proof we relate the original Walter's conjectures to the above theorem.
\begin{cor}
The inequality (\ref{eqn:BOStronger}) is satisfied for all $0<x\leq X$ with an arbitrary $X >0$ when
\begin{itemize}
	\item $f:[0,X]\mapsto [0,\infty)$ is non-decreasing and $\alpha\geq 1$, or
	\item $f:[0,X]\mapsto [0,\infty)$ is non-increasing and $0<\alpha \leq 1$. 
\end{itemize}
\end{cor}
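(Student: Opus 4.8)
The plan is to obtain the corollary as an immediate specialization of Theorem \ref{thm:Walter}, choosing the kernel and nonlinearity so that the generalized inequality (\ref{eqn:BOGeneralized}) collapses onto (\ref{eqn:BOStronger}). Concretely, I would take $g(y) = y^\alpha$ and $k(s) = \alpha s^{\alpha-1}$. With these choices the left-hand side of (\ref{eqn:BOGeneralized}) becomes $\int_0^x \alpha (x-s)^{\alpha-1} f(s)^\alpha\,ds$ and the right-hand side becomes $\left(\int_0^x f(s)\,ds\right)^\alpha$, which is exactly the target. Thus the whole task reduces to verifying that the hypotheses of Theorem \ref{thm:Walter} hold for these data.

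The key computation is to evaluate the auxiliary quantities in (\ref{eqn:WalterCondition}). First I would note that $k \in L^1[0,X]$, since $\alpha - 1 > -1$ for every $\alpha > 0$ so the weak singularity at the origin is integrable. Next I would compute $K(x) = \int_0^x \alpha s^{\alpha-1}\,ds = x^\alpha$, whence
\[
	h_c(y) = g(cy) - K(c) g(y) = (cy)^\alpha - c^\alpha y^\alpha = 0
\]
for every admissible $c$ and every $y$. The decisive observation is that for a matched power kernel and power nonlinearity $h_c$ vanishes identically; a constant zero function is nonnegative and simultaneously non-decreasing and non-increasing, so the condition imposed on $h_c$ is automatically met under both (\ref{eqn:CondI}) and (\ref{eqn:CondII}).

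It then only remains to pair the monotonicity of $f$ with the appropriate convexity of $g$. When $\alpha \geq 1$ the map $y \mapsto y^\alpha$ is convex on $[0,\infty)$, so combining it with a non-decreasing $f$ places us in case (\ref{eqn:CondI}); when $0 < \alpha \leq 1$ the map is concave, so combining it with a non-increasing $f$ places us in case (\ref{eqn:CondII}). In either case Theorem \ref{thm:Walter} applies and delivers (\ref{eqn:BOStronger}) for all $0 < x \leq X$ with $X > 0$ arbitrary, which is the assertion.

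I do not anticipate a genuine obstacle in this argument; the only points requiring care are the bookkeeping of which monotonicity of $f$ matches which convexity of $g$, and the recognition that the identical vanishing of $h_c$ is precisely what makes both branches of Theorem \ref{thm:Walter} available at once. The substantive content has already been discharged in the proof of Theorem \ref{thm:Walter} itself.
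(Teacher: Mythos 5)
Your proposal is correct and coincides essentially verbatim with the paper's own proof: the same choices $k(s)=\alpha s^{\alpha-1}$, $g(y)=y^\alpha$, the same computation $K(c)=c^\alpha$ giving $h_c\equiv 0$ (hence satisfying both (\ref{eqn:CondI}) and (\ref{eqn:CondII})), and the same matching of monotonicity of $f$ with convexity/concavity of $g$. Your additional remark that $k\in L^1[0,X]$ because $\alpha-1>-1$ is a minor extra verification the paper leaves implicit.
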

\begin{proof}
We have $k(x) = \alpha x^{\alpha-1}$ and hence, $K(x) = x^\alpha$. Further, $g(y) = y^{\alpha}$ and thus for $0<c\leq X$ we have,
\begin{equation}
	h_c(y) = g(cy) - K(c) g(y) = (c y)^{\alpha} - c^\alpha y^{\alpha} = 0,
\end{equation}
and the condition for $h_c$ is satisfied both in (\ref{eqn:CondI}) and (\ref{eqn:CondII}). Since the power function $g$ is concave for $0<\alpha\leq 1$ and convex for $\alpha \geq 1$ the proof is completed by the use of Theorem \ref{thm:Walter}. 
\end{proof}
We can now proceed to the proof of Walter and Weckesser's result.
\begin{proof}(of Theorem \ref{thm:Walter}). 
We will focus only on (\ref{eqn:CondI}) case, the proof of the other is similar. Due to Beppo Levi's Theorem of Monotone Convergence, it is sufficient to consider (\ref{eqn:BOGeneralized}) for $f$ which are step functions
\begin{equation}
	f(x) = \sum_{i=1}^{n-1} a_i \chi_{[x_{i-1},x_i)}(x) + a_n \chi_{[x_{n-1},x_n]}(x),
\end{equation} 
for a partition
\begin{equation}
	0 = x_0 < x_1 < x_2 < ... < x_n = x,
\end{equation}
where $\chi_A$ is a characteristic function of a measurable set $A$. If the integral on the left in (\ref{eqn:BOGeneralized}) is denoted by $L_n$, a simple calculation gives
\begin{equation}
	L_n = \sum_{i=1}^n \left(K(x-x_{i-1})-K(x-x_i)\right)g(a_i).
\end{equation}
Similarly, 
\begin{equation}
	I_n := \int_0^x f(s) ds = \sum_{i=1}^n a_i (x_i-x_{i-1}),
\end{equation}
and, hence, the right-hand side of (\ref{eqn:BOGeneralized}), denoted by $R_n$, satisfies $R_n = g(I_n)$. Now, notice that for $n=1$, that is for constant functions, we have $x_1 = x$, and
\begin{equation}
	L_1 - R_1 = g(a_1) \left(K(x) - K(0)\right) - g(a_1 x) = h_{a_1}(x) \geq 0,
\end{equation}
by the assumption on $h_c$. We can now utilize mathematical induction. Assume that $L_n \leq R_n$. We claim that this inequality holds then for $n+1$. Without any loss of generality we can assume that the $n+1$-th step of the function $f$ can arise as a partition of the interval $[x_{n-1}, x_n]$. Pick any $0<c<x_{n}-x_{n-1}$ and $y\geq 0$. Now, the update to the non-decreasing step function $f$ is given by
\begin{equation}
	f(x) = \sum_{i=1}^{n-1} a_i \chi_{[x_{i-1},x_i)}(x) + a_n \chi_{[x_{n-1},x_n-c)} + (a_n+y) \chi_{[x_{n}-c,x_n]}.
\end{equation}
It is now straightforward to calculate the relevant integrals. It follows that only the last interval makes the difference, that is since we still have $x_n = x$, and
\begin{equation}
	\Delta L_{n+1} := L_{n+1} - L_n = \left(g(a_n +y) - g(a_n)\right) \int_{x-c}^x k(x-s)ds = \left(g(a_n +y) - g(a_n)\right) K(c). 
\end{equation}
Similarly,
\begin{equation}
	\Delta R_{n+1} := R_{n+1} - R_n = g(I_n+ cy) - g(I_n).
\end{equation}
Since, $g$ is convex and trivially $a_n c \leq I_n$ we further have $\Delta R_{n+1} \leq g(c (a_n+y)) - g(c a_n)$. Next, by the fact that $h_c$ is non-decreasing 
\begin{equation}
	\Delta L_{n+1} - \Delta R_{n+1} \leq \left(g(a_n +y) - g(a_n)\right) K(c) - g(c (a_n+y)) + g(c a_n) = h_c(a_n) - h_c(a_n+y) \leq 0.
\end{equation}
Therefore, by the inductive assumption that $L_n \leq R_n$ we have $L_{n+1} = L_n + \Delta L_{n+1} \leq R_{n} + \Delta R_{n+1} = R_{n+1}$ and the proof is complete. 
\end{proof}

As we have seen, the proof of Walter and Weckesser is of completely different nature than Bushell and Okrasi\'nski's. It can be regarded as elementary that allows for a substantial improvement of the claim. We have seen in the above corollary that taking $K$ and $g$ as power functions the general inequality (\ref{eqn:BOGeneralized}) reduces to the stronger version of the original one (\ref{eqn:BOStronger}). Quite recently, T. Ma{\l}olepszy and J. Matkowski, asked a somewhat reverse question: is this the only choice that yields Bushell-Okrasi\'nski inequality? (see \cite{Mal19}). They prove several results concerning that topic. One of which states that assuming $X>1$, (\ref{eqn:CondI}), and $K(x) = x^\alpha$, the only choice for the other function is very restricted, that is $g(y) = g(1) y^\alpha$ or $g \equiv 0$ for $\alpha \geq 1$. Interestingly, for $0<\alpha <1$ the only allowed choice is a trivial $g$. 

\subsection{Equality in (\ref{eqn:BOOrginalStronger})}
Having an inequality of the type (\ref{eqn:BOStronger}) there naturally arises a question about its sharpness. This quickly can be answered positively by taking a constant function $f$ for which the inequality becomes an equality. But is this the only case when it occurs? It was shown in \cite{Wal93} that (notice that here $x=1$)
\begin{equation}
	\label{eqn:WalterConjectureSolved}
	\alpha \int_0^1 (1-t)^{\alpha-1} f(t)ds = \int_0^1 f(t) dt \quad \iff \quad f \equiv \text{const.}
\end{equation}
Encouraged by this example, W. Walter asked whether the same conclusion holds true for the Bushell-Okrasi\'nski inequality
\begin{equation}
	\label{eqn:WalterConjecture}
	\alpha \int_0^1 (1-t)^{\alpha-1} f(t)^\alpha ds = \left(\int_0^1 f(t) dt\right)^\alpha \quad \iff \quad f \equiv \text{const.}
\end{equation}
It is relatively easy to prove that an equality in (\ref{eqn:BOOrginalStronger}) occurs only for constant $f$ when $\alpha \in\mathbb{N}$. The proof follows the same route as the original one by Bushell and Okrasi\'nski for their inequality. One has just to inductively verify conditions for which $I_n$ defined in (\ref{eqn:BOIn}) is equal to $0$. This method was generalized P. Bushell and A. Carbery in \cite{Bus01}. They have proved that the equality in (\ref{eqn:BOOrginalStronger}) for all $\alpha\geq 1$ (in fact, in some generalized version of it) occurs only if for some $0\leq x_0 < X$ we have
\begin{equation}
	f(x) = 
	\begin{cases}
		0, & 0 < x \leq x_0, \\
		C, & x_0 < x \leq X,
	\end{cases}
\end{equation}
where $C>0$ is a constant. This resolves the open problem posed by Walter. 

\subsection{Reversed Jensen type inequalities}
In subsequent years following \cite{Bus90} and \cite{Wal92} several other generalization and improvements of Bushell-Okrasi\'nski inequality appeared in the literature. For example, Y. Egorov in 2000 gave another elementary functional-analytic proof of Walter's conjecture \cite{Ego00} in the case of continuous functions for a slightly stronger inequality as in (\ref{eqn:BOOrginalStronger}). 
%
%
%
P. Bushell himself went further into the direction of investigating the reversed Jensen inequality. In a paper with A. Carbery \cite{Bus01} they stated the following result (actually, they have proved a much general inequality). The proof is different than Walter and Weckesser's and below we present its key features. 

\begin{thm}[Corollary 2 in \cite{Bus01}]
Let $f$ be non-decreasing, positive function on $[0,X]$ and $g$ positive and convex with $g(0) = 0$. For any positive and integrable function $k$ define
\begin{equation}
	K(x) = \int_0^x k(s) ds, \quad 0<x\leq X.
\end{equation}
Further, suppose that
\begin{equation}
\label{eqn:BushellCondition}
	g\left(\frac{y}{c} \right) K(cx) \leq g(y) K(x), \quad 0<x<X, \quad y >0, \quad 0\leq c<1.
\end{equation}
Then,
\begin{equation}
\label{eqn:Bushell}
	\int_0^x k(x-s) g(f(s)) ds \leq K(x) g\left(\frac{1}{x}\int_0^x f(s) ds\right), \quad 0<x \leq X. 
\end{equation}
\end{thm}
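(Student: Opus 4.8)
The plan is to exploit the fact that hypothesis (\ref{eqn:BushellCondition}) is not an ad hoc technical assumption but is \emph{exactly} the inequality (\ref{eqn:Bushell}) evaluated on its own extremizers. First I would rewrite (\ref{eqn:BushellCondition}) as a monotonicity statement: setting $x=t_2$, $c=t_1/t_2$ and $y=w/t_2$ it becomes
\[
 K(t_1)\,g\!\left(\frac{w}{t_1}\right)\le K(t_2)\,g\!\left(\frac{w}{t_2}\right),\qquad 0<t_1<t_2\le X,
\]
so that for each fixed $w>0$ the map $t\mapsto K(t)\,g(w/t)$ is non-decreasing on $(0,X]$. Writing $A(x):=\int_0^x f(s)\,ds$, the right-hand side of (\ref{eqn:Bushell}) is precisely this map at $t=x$ with $w=A(x)$. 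Moreover, testing (\ref{eqn:Bushell}) on the two-level function $f=C\chi_{(x_0,x]}$ (the very extremizer singled out in \cite{Bus01}) reduces it, after the elementary computation $\int_{x_0}^x k(x-s)\,ds=K(x-x_0)$, to $g(C)K(cx)\le g(cC)K(x)$ with $c=(x-x_0)/x$, which is (\ref{eqn:BushellCondition}) again. This gives both the base case and a clear picture of where equality comes from.

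Second, I would reduce the general statement to step functions: a non-negative non-decreasing $f$ is an increasing limit of non-decreasing step functions, so by Monotone Convergence it suffices to treat piecewise-constant $f$, exactly as in the proof of Theorem \ref{thm:Walter}. To handle these cleanly I would integrate by parts in the left-hand side, using $\tfrac{d}{ds}[-K(x-s)]=k(x-s)$ and $K(0)=0$, to obtain
\[
 \int_0^x k(x-s)\,g(f(s))\,ds = K(x)\,g(f(0)) + \int_0^x K(x-s)\,dg(f(s)),
\]
where $dg(f(s))$ is the non-negative Stieltjes measure generated by the non-decreasing function $g\circ f$. The target (\ref{eqn:Bushell}) then becomes $\int_0^x K(x-s)\,dg(f(s))\le K(x)\big(g(A(x)/x)-g(f(0))\big)$, an inequality in which the monotone reformulation of (\ref{eqn:BushellCondition}) can be applied to the integrand $K(x-s)$, and in which the convexity of $g$ enters through $g(\lambda a)\le\lambda g(a)$ for $\lambda\in[0,1]$ (a consequence of $g(0)=0$). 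The induction would add one jump to $f$ at a time and show the increment of the left-hand side is dominated by that of the right, combining the block inequality with this convexity bound.

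The hard part will be the superposition step, i.e.\ passing from a single level set to many. Because $g$ is nonlinear, the contribution of an added jump does not simply add, and a single global application of (\ref{eqn:BushellCondition}) with one choice of $w$ is provably too weak: in the power model $K(t)=t^\alpha$, $g(t)=t^\beta$ it merely reproduces (\ref{eqn:BOStronger}) for the rearranged profile rather than proving it. Thus the argument must use the non-decreasing ordering of $f$ (equivalently, the non-increasing ordering of its super-level widths) in an essential, telescoping way, so that the scaling condition is applied level-by-level and the convexity estimate controls the resulting cross terms. Making this book-keeping precise --- and verifying that the inductive increments match up to give exactly the prefactor $K(x)$ and the average inside $g$ --- is where the real work lies; the limiting passage back from step functions to general $f$, and the treatment of the endpoint value $f(0)$, are then routine.
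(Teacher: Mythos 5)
Your preliminary reductions are correct and cleanly executed: the reformulation of (\ref{eqn:BushellCondition}) as monotonicity of $t \mapsto K(t)\,g(w/t)$, the check that the one-jump function $C\chi_{(x_0,x]}$ turns (\ref{eqn:Bushell}) exactly into (\ref{eqn:BushellCondition}), and the Stieltjes integration by parts are all valid. But the proposal is not a proof: the inductive step --- passing from $n$ jumps to $n+1$ --- is precisely what you defer as ``where the real work lies,'' and nothing in the proposal supplies the mechanism to close it. The difficulty is structural, not book-keeping. In Walter--Weckesser's induction (Theorem \ref{thm:Walter}) the hypothesis is a \emph{difference} condition ($h_c$ nonnegative and non-decreasing), which matches term-by-term the increments $\Delta L_{n+1}$ and $\Delta R_{n+1}$ produced by adding one jump. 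Here the hypothesis is a \emph{scaling} condition, and adding a jump perturbs the right-hand side through both the prefactor $K(x)$ and the average inside $g$ simultaneously; your own remark that a single application of (\ref{eqn:BushellCondition}) is ``provably too weak'' concedes that the level-by-level telescoping you would need has not been exhibited and is not an immediate consequence of (\ref{eqn:BushellCondition}) together with convexity. As it stands you have a verified base case and a program, not an argument.

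The paper's proof takes a genuinely different route that sidesteps this issue entirely, and it is worth comparing. After reducing to non-increasing $f$ via the substitution $s\mapsto x-s$, it converts the scaling hypothesis into an infinitesimal form: taking $c = y/(y+\epsilon)$ in (\ref{eqn:BushellCondition}), subtracting, dividing by $\epsilon$ and letting $\epsilon\to 0$ yields (for a.e.\ $y$, since convex $g$ is differentiable a.e.) the differential inequality $y\,g'(y)\,K(x) \leq x\,g(y)\,k(x)$. Then, writing $h(x) = \frac{1}{x}\int_0^x f(s)\,ds$, one shows that $\varphi(x) = K(x)\,g(h(x)) - \int_0^x k(s)\,g(f(s))\,ds$ satisfies $\varphi(0)=0$ and $\varphi'\geq 0$: the derivative splits into one term signed by the monotonicity of $y \mapsto g(y)/y$ (a consequence of convexity with $g(0)=0$) combined with $h \geq f$ for non-increasing $f$, and another term signed by the differential inequality above. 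That derivative argument is exactly the device that replaces your missing induction; to salvage your approach you would have to either discover the discrete analogue of this increment estimate, or abandon step functions and adopt the differential route.
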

\begin{proof}
Fix $x\in (0,X]$. It will be easier to work with a \emph{non-increasing} function $f$, because then $f(x-s)$ is \emph{non-decreasing}, and hence (\ref{eqn:Bushell}) is equivalent to 
\begin{equation}
\label{eqn:BushellNonInc}
		\int_0^x k(s) g(f(x-s)) ds \leq K(x) g\left(\frac{1}{x}\int_0^x f(s) ds\right),
\end{equation}
after substitution $s \mapsto x-s$. Henceforth, we assume that $f$ is non-increasing and we the claim the validity of the above inequality. 

Take any $\epsilon > 0$ and use (\ref{eqn:BushellCondition}) with $c = y/(y+\epsilon)$ to obtain
\begin{equation}
	g(y+\epsilon) K \left(\frac{xy}{y+\epsilon}\right) \leq g(y) K(x).
\end{equation}
By subtraction with (\ref{eqn:BushellCondition}) we arrive at
\begin{equation}
	\left(g(y+\epsilon) - g(y)\right)  K \left(\frac{xy}{y+\epsilon}\right) \leq g(y) \left(K(x) - K\left(x- \frac{x}{y+\epsilon}\epsilon\right)\right),
\end{equation}
since every convex function is differentiable $y$-a.e. we can divide by $\epsilon$ and pass to the limit obtaining
\begin{equation}
\label{eqn:BushellConditionDer}
	y g'(y) K(x) \leq x g(y) K(x).
\end{equation}
Since $f$ is non-increasing and positive, there exists $x_0 \in (0,X]$ such that $f(x) > 0$ on $[0,x_0)$ and $f(x) = 0$ on $[x_0,X)$. Therefore,
\begin{equation}
	h(x) := \frac{1}{x}\int_0^x f(s) ds \geq f(x) > 0 \quad \text{in} \quad I_0 := (0,x_0),
\end{equation}
with $h(0) = f(0)$. Then, if we define
\begin{equation}
	\Delta(x) = x g(h(x))k(x) - h(x) g'(h(x))K(x),
\end{equation}
and 
\begin{equation}
	\varphi(x) = K(x) g(h(x)) - \int_0^x k(s) g(f(s))ds,
\end{equation}
a straightforward computation yields $\varphi(0) = \varphi'(0) = 0$, and
\begin{equation}
	\varphi'(x) = k(x) f(x) \left(\frac{g(h(x))}{h(x)} - \frac{g(f(x))}{f(x)}\right) + \frac{\Delta(x)}{x h(x)} \left(h(x) - f(x)\right), \quad 0<x<x_0,
\end{equation}
and a similar expression for the interval $[x_0, X]$. Because of (\ref{eqn:BushellConditionDer}) we have $\Delta(x) \geq 0$ and hence, the second term above is non-negative. On the other hand, the function $g(y)/y$ is non-decreasing due to convexity of $g$ as can be verified by computing derivatives. Whence, we conclude that $\varphi(x) \geq 0$ which immediately proves that 
\begin{equation}
	\int_0^x k(s) g(f(s))ds \leq K(x) g(h(x)).
\end{equation}
As we mentioned before, taking any \emph{non-decreasing} function $f$, plugging $f(x-s)$ into above, and substituting $s \mapsto x-s$ yields (\ref{eqn:Bushell}) what we had to prove. 
\end{proof}
There is an interesting corollary of the above. Taking $g(y) = y^\alpha$ and $K(x) = x^\beta$ with $1\leq \alpha \leq \beta$ yields
\begin{equation}
	\beta \int_0^x (x-s)^{\beta-1}f(s)^\alpha ds \leq x^{\beta-\alpha} \left(\int_0^x f(s) ds \right)^\alpha, \quad 0<x\leq 1
\end{equation}
which is the "$\beta$-generalization" of the Bushell-Okrasi\'nski inequality. Note that Walter and Weckesser's result (\ref{eqn:BOGeneralized}) would give a weaker inequality lacking the $x^{\beta-\alpha}$ factor on the right-hand side. 

Another interesting approach to generalization of (\ref{eqn:BOOrginalStronger}) was given by S. M. Malamud in 2001 (see \cite{Mal01}). The author observed that the Bushell-Okrasi\'nski inequality is a strengthening of the classical Chebyshev's result for increasing $f \geq 0$
\begin{equation}
	\alpha \int_0^1 (1-s)^{\alpha-1} f(s)^\alpha ds \leq \left(\alpha \int_0^1 (1-s)^{\alpha-1} ds\right) \left(\int_0^1 f(s)^\alpha ds \right) =  \int_0^1 f(s)^\alpha ds.
\end{equation} 
The point is that by H\"older's inequality we always have $\int_0^1 f^\alpha ds \geq (\int_0^1 f ds)^\alpha$. Malamud proved the following general result
\begin{equation}
\label{eqn:MalamudGeneral}
	\frac{\displaystyle{\int_0^1 g(f(s)) w(s) d\Phi(s)}}{\displaystyle{\int_0^1 w(s) d\Phi(s)}} \leq g\left(\frac{\displaystyle{\int_0^1 f(s) w(s) ds}}{\displaystyle{\int_0^1 w(s) ds}} \right),
\end{equation}
when $w$ is the weight and there are certain restrictions put on $g$ and $\Phi$. The above reduces to the Bushell-Okrasi\'nski inequality for $g(x) = x^\alpha$, $\Phi(x) = 1-(1-x)^\alpha$, and $w\equiv 1$. Moreover, it can also be thought as an inverse to Jensen's inequality. Similarly to Walter and Weckesser's result, the proof proceeds by approximation of step functions. Also, a question whether the equality in the above occurs only for constant functions stays unanswered. For simplicity, we will consider only the unweighted case, i.e. $w\equiv 1$. The general inequality can be proved by the technique of approximation with step functions, however the proof is more involved and the assumptions are more restrictive. 

\begin{thm}[Theorem 2.1 in \cite{Mal01}]
Suppose that $f$ is a positive non-decreasing function, $\Phi$ is a positive function of bounded variation equal to $1$, and $g$ is positive, non-decreasing, convex, and differentiable function. Then, provided that
\begin{equation}
\label{eqn:MalamudCondition}
	g'(y) \frac{\Phi(1)-\Phi(x)}{1-x} \leq g'(y(1-x)), \quad 0<x<1, \quad 0<\lambda<\infty,
\end{equation}
we have
\begin{equation}
\label{eqn:Malamud}
	\int_0^1 g(f(s)) d\Phi(s) \leq g\left(\int_0^1 f(s) ds\right).
\end{equation}
\end{thm}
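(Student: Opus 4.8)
The plan is to follow the template of Walter and Weckesser's argument: reduce to step functions by monotone approximation, and then run an induction on the number of steps in which the single hypothesis (\ref{eqn:MalamudCondition}) is exactly what powers the inductive step. Throughout I treat $\Phi$ as non-decreasing with $\Phi(0)=0$ and $\Phi(1)=1$, so that $d\Phi$ is a probability measure (this holds in the guiding example $\Phi(x)=1-(1-x)^\alpha$), which is what makes the reversed Jensen direction meaningful.

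First I would reduce to step functions. Since $f$ is non-decreasing and positive, I approximate it from below by non-decreasing step functions $f_m\uparrow f$ (taking $f_m$ equal to the left-hand value of $f$ on each interval of a refining partition). Because $g$ is non-decreasing and continuous, $g(f_m)\uparrow g(f)$ and $\int_0^1 f_m\,ds\to\int_0^1 f\,ds$, so Beppo Levi's Theorem of Monotone Convergence (applied to the positive measure $d\Phi$ on the left and to Lebesgue measure on the right) shows that (\ref{eqn:Malamud}) for all such $f_m$ yields it for $f$. Hence it suffices to prove the inequality for a non-decreasing step function $f=\sum_{i=1}^n a_i\chi_{[x_{i-1},x_i)}$ with $0=x_0<\cdots<x_n=1$ and $a_1\le\cdots\le a_n$, where the two sides become $L_n=\sum_{i=1}^n g(a_i)\,[\Phi(x_i)-\Phi(x_{i-1})]$ and $R_n=g\!\left(\sum_{i=1}^n a_i(x_i-x_{i-1})\right)$.

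Now I would induct on $n$. For $n=1$ the function is constant, $L_1=g(a_1)[\Phi(1)-\Phi(0)]=g(a_1)=R_1$, so equality holds. For the inductive step I view an $(n+1)$-step function as obtained from an $n$-step function (its top two steps merged to the lower value) by splitting the top interval: on the last interval, which ends at $x=1$ and has length $c$, the value is raised from $a_n$ to $a_n+y$ with $y\ge 0$. The increments are $\Delta L=[g(a_n+y)-g(a_n)]\,[\Phi(1)-\Phi(1-c)]$ and $\Delta R=g(I_n+cy)-g(I_n)$, where $I_n$ is the integral of the $n$-step function and $I_n\ge a_n c$ because $f\equiv a_n$ on the top interval. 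Convexity of $g$ makes $u\mapsto g(u+cy)-g(u)$ non-decreasing, so $\Delta R\ge g(c(a_n+y))-g(ca_n)$. It therefore suffices to show $\Delta L\le g(c(a_n+y))-g(ca_n)$.

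This last step is where (\ref{eqn:MalamudCondition}) enters, and it is the crux of the argument. Setting $x=1-c$ in (\ref{eqn:MalamudCondition}) and writing the dummy variable as $t$ gives, after multiplying by $c$, the pointwise inequality $g'(t)\,[\Phi(1)-\Phi(1-c)]\le c\,g'(ct)$ for every $t>0$ (positivity of $f$ keeps the relevant values in this range). Integrating over $t\in[a_n,a_n+y]$ and using $\int_{a_n}^{a_n+y} c\,g'(ct)\,dt=g(c(a_n+y))-g(ca_n)$ turns it into exactly $\Delta L\le g(c(a_n+y))-g(ca_n)$, which combined with the convexity bound gives $\Delta L\le\Delta R$; with the inductive hypothesis $L_n\le R_n$ this yields $L_{n+1}\le R_{n+1}$, completing the induction. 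The main obstacle is recognizing that the differential condition (\ref{eqn:MalamudCondition}) is precisely the infinitesimal form of the required finite-increment inequality and that integrating it over the jump $[a_n,a_n+y]$ recovers the finite version; a secondary point requiring care is that the reduction to step functions needs $d\Phi$ to be a genuine positive measure (equivalently, $\Phi$ non-decreasing), without which the Monotone Convergence step and even the base case would fail.
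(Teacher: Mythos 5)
Your proof is correct, but it follows a genuinely different route from the paper's proof of this statement. The paper reproduces Malamud's own argument: on a \emph{uniform} partition it forms the deficit $\varphi(y)=R_n-L_n$ as a function of the largest step value $y=a_n$, shows $\varphi'(y)\ge 0$ by combining convexity of $g$ with (\ref{eqn:MalamudCondition}) evaluated at $x=(n-1)/n$, and then descends (first $a_n=a_{n-1}$, then $a_{n-1}=a_{n-2}$, and so on) until the trivial constant case is reached. You instead transplant the Walter--Weckesser induction, which the paper uses to prove Theorem \ref{thm:Walter}: split the top interval, compare the increments $\Delta L$ and $\Delta R$, and close the gap by integrating the pointwise condition $g'(t)\left[\Phi(1)-\Phi(1-c)\right]\le c\,g'(ct)$ over the jump $[a_n,a_n+y]$. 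The two uses of the hypothesis are dual---the paper differentiates the deficit, you integrate the differential condition---and your argument operationalizes the remark the paper makes only \emph{after} its proof, namely that (\ref{eqn:MalamudCondition}) is precisely the infinitesimal form of the monotonicity of Walter--Weckesser's $h_c$ (with $c=1-x$), so that Malamud's theorem becomes a corollary of that machinery once $g$ is differentiable. What your route buys: arbitrary partitions, a single clean inductive step, and an explicit bridge between the two theorems. What the paper's route buys: it works directly with the derivative hypothesis without invoking the fundamental theorem of calculus for $g$ (a harmless point here, since $g$ convex and differentiable forces $g'$ to be continuous, hence the integration step you perform is legitimate). Your caveat that $\Phi$ must be non-decreasing with $\Phi(1)-\Phi(0)=1$ for the Monotone Convergence reduction is well taken; the paper's statement of the hypotheses is garbled on this point, and its own proof tacitly requires the same reading.
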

\begin{proof}
Without the loss of generality we approximate the monotone function by an increasing sequence of step-functions
\begin{equation}
	f_n(x) = \sum_{i=1}^n a_i \chi_{(\frac{i-1}{n},\frac{i}{n})}(x).
\end{equation}
For this choice, the inequality simply becomes
\begin{equation}
\label{eqn:MalamudInequalityStep}
	\sum_{i=1}^n g(a_i) \left(\Phi\left(\frac{i}{n}\right) - \Phi\left(\frac{i-1}{n}\right)\right) \leq g\left(\frac{1}{n}\sum_{i=1}^n a_i\right).
\end{equation}
We next introduce $\varphi$ as the difference between the left and right-hand side of the above as a function of the largest value of $f$, that is
\begin{equation}
	\varphi(y) := g\left(\frac{y}{n} + \frac{1}{n}\sum_{i=1}^{n-1} a_i\right) - g(y) \left(\Phi\left(1\right) - \Phi\left(\frac{n-1}{n}\right)\right) - \sum_{i=1}^{n-1} g(a_i) \left(\Phi\left(\frac{i}{n}\right) - \Phi\left(\frac{i-1}{n}\right)\right).
\end{equation}
Then, taking the derivative gives
\begin{equation}
\begin{split}
	\varphi'(y) &= \frac{1}{n} g'\left(\frac{y}{n} + \frac{1}{n}\sum_{i=1}^{n-1} a_i\right) -  g'(y) \left(\Phi\left(1\right) - \Phi\left(\frac{n-1}{n}\right)\right) \\
	&\geq \frac{1}{n} g'\left(y\left(1-\frac{n-1}{n}\right)\right) -  g'(y) \left(\Phi\left(1\right) - \Phi\left(\frac{n-1}{n}\right)\right),
\end{split}
\end{equation}
since $g$ is convex. Further, by our assumption (\ref{eqn:MalamudCondition}) we conclude that $\varphi'(y) \geq 0$. Therefore, $\varphi$ increases and we can consider (\ref{eqn:MalamudInequalityStep}) for the worst case, that is for $a_n = a_{n-1}$ (since by assumption we always have $a_n \geq a_{n-1}$). But then, by redefining the function $\varphi$ for $x=a_{n-1}$ we reduce the inequality to the case when $g_{n-1} = g_{n-2}$. Continuing in this fashion yields the obvious $\sum_{i=1}^n \left(\Phi(i/n)-\Phi((i-1)/n)\right) =1$. The general case follows from the Monotone Convergence Theorem. 
\end{proof}
We can note how the Malamud's result (\ref{eqn:Malamud}) corresponds with Walter and Weckesser's (\ref{eqn:BOGeneralized}). Recall the definition of the function $h_c(y)$ in (\ref{eqn:WalterCondition}). If we assume that it is differentiable, then it is non-decreasing when $h'_c(y) \geq 0$. But this requirement is exactly the same as (\ref{eqn:MalamudCondition}) with $c = 1-x$ and $k(t) = \Phi'(1-t)$ provided the latter derivative exists. Therefore, we can think that Walter and Weckesser's result requires less regularity than Malamud's for the generalized Bushell-Okrasi\'nski inequality to hold. Note, however, that on the other hand (\ref{eqn:MalamudGeneral}) is more general than (\ref{eqn:BOGeneralized}). 

We end this section with mentioning some other approaches to generalizing Bushell-Okrasi\'nski inequality. In 1995 H. Heinig and L. Maligranda proved that for $f$, $\Phi$ positive and non-decreasing with $\lim\limits_{s\rightarrow a^+} \Phi(s) = 0$ it holds that
\begin{equation}
	\int_a ^b f(b-s)^\alpha d\left(\Phi(s)\right)^\alpha \left(\leq \int_a^b f(b-s) d\Phi(s)\right)^\alpha, \quad \alpha \geq 1,
\end{equation}
which is (\ref{eqn:BOOrginalStronger}) for $\Phi(s) = s$, $a=0$, and $b = x$. Note that the Malamud's inequality (\ref{eqn:Malamud}) includes this case as (\ref{eqn:MalamudCondition}) is satisfied since $g(s) = s^\alpha$ is convex. Further generalizations had been given in \cite{Bar97}.

\section{Bushell-Okrasi\'nski inequality for fuzzy integrals}
Lately, a number of researchers have initiated the programme of extending the Bushell-Okrasi\'nski inequality onto some other than Lebesgue types of integrals. In 2008 a Sugeno type fuzzy integral has been considered by H. Rom\'an-Flores, A. Flores-Franuli\v c, and Y. Chalco-Cano \cite{Rom08}. In order to present this interesting result first we have to introduce some concepts concerning fuzzy measures (for a comprehensive treatment see \cite{Wan13}). 

\begin{df}
Let $\Sigma$ be the $\sigma$-algebra of subsets of $\mathbb{R}$. Then, a function $\mu: \Sigma \mapsto [0,\infty]$ is a \emph{fuzzy measure} if
\begin{itemize}
	\item $\mu(\emptyset) = 0$, 
	\item it is monotone,
	\item it is continuous from above and below. 
\end{itemize}
\end{df}

In particular, the difference with the classical measure is the fact that in the fuzzy setting we relax the requirement of additivity in favour for monotonicity from both sides. If $f$ is a non-negative real-valued function we define its $\alpha$-level set by $\{f > \alpha\} := \{x\in\mathbb{R}: \, f(x) > \alpha\}$ with $\alpha > 0$. Moreover, if $\mu$ is a fuzzy measure we define
\begin{equation}
	\mathcal{F}(\mathbb{R}) := \{f:\mathbb{R} \mapsto [0,\infty): \, f \text{ is measurable}\}
\end{equation}
This lets us define the Sugeno fuzzy integral.
\begin{df}[Sugeno integral \cite{Sug74}]
Let $\mu$ be the fuzzy measure on $(\Sigma, \mathbb{R})$. For $f\in \mathcal{F}$ and $A\in \Sigma$ the Sugeno integral (or fuzzy integral) is defined as
\begin{equation}
	\dashint_A f d\mu = \sup_{\alpha \geq 0} \left[ \min \left(\alpha, \mu(A \cap \{f\geq \alpha\})\right) \right]
\end{equation}
\end{df}
It is interesting to observe that Sugeno integrals do not enjoy some properties of the Lebesgue integrals. For example, they are not linear operators. However, many types of inequalities can be proved also for Sugeno integrals. For example, A. Flores-Franuli\v c and H. Rom\'an-Flores \cite{Flo07} showed the following Chebyshev inequality for strictly increasing continuous functions 
\begin{equation}
	\dashint_0^1  f g d\mu \geq \left(\dashint_0^1 f d\mu\right)\left(\dashint_0^1 g d\mu\right),
\end{equation}
where $\mu$ is the Lebesgue measure. In similar spirit, K. Sadarangani and J. Caballero \cite{Cab10} proved the following type of Chebyshev inequality for Sugeno integrals
\begin{equation}
	\mu\left(x\in A: f(x) > \alpha \right) \leq \frac{1}{\alpha^2} \dashint_A f^2 d\mu, \quad 0<\alpha\leq 1,
\end{equation}
for $\mu: \sigma \mapsto [0,1]$ being a fuzzy measure and positive $f\in\mathcal{F}$. It is important to note that the above inequality is valid if and only if when $0<\alpha\leq 1$. The proof is of of completely different nature than in the Lebesgue case since one cannot utilize the linearity of the integral operator. The Bushell-Okrasi\'nski type inequality is also valid for Sugeno integrals. In \cite{Rom08} authors showed that for positive, continuous, and increasing function $f$ we have
\begin{equation}
\label{eqn:BOSugeno}
	\alpha \dashint_0^1 s^{\alpha-1}f(s)^\alpha ds \geq \left(\dashint_0^1 f(s)ds\right)^\alpha, \quad \alpha \geq 2.
\end{equation}
The proof starts with the aforementioned fuzzy Chebyshev inequality and utilizes a number of techniques from fuzzy measure theory. We omit it since it is out of scope of our review. Note that the above is valid for $\alpha\geq 2$ and, surprisingly, the inequality is reversed in contrast with the result for Lebesgue integrals. However, as was shown recently by D. Hong in 2020, the above formulation of the inequality is not optimal \cite{Hon20}. Instead, with the above assumptions we have the following
\begin{equation}
\label{eqn:BOSugenoStronger}
	\left(\dashint_0^1 s^{\alpha-1} ds\right)^{-1} \dashint_0^1 s^{\alpha-1}f(s)^\alpha ds \geq \left(\dashint_0^1 f(s)ds\right)^\alpha, \quad \alpha \geq 1,
\end{equation}
where now we allow for the whole range of $\alpha$. Notice the constant in the parenthesis above. For the Lebesgue integral it would equal $\alpha$. However, as noted in \cite{Hon20}, for Sugeno case it is always smaller or equal to it (for example, when $\alpha=3$ it equals $2.618$). Hong also gives some useful estimates on this prefactor. Notice a completely different behaviour of Sugeno integral when compared with Labesgue case. For a literature concerning different inequalities for Sugeno integrals the reader is referred to \cite{Rom13}.

Apart from Sugeno integrals various generalizations of the concept of integration have been proposed, analysed, and applied. Reviewing these would take us too far from the main topic of our short exposition. Some of these generalizations posses their own Bushell-Okrasi\'nski type inequalities. For instance, pseudo-integrals for which, loosely speaking, instead of the field of real numbers one considers a semi-ring defined on a real interval, exhibit a version of (\ref{eqn:BOOrginalStronger}) with redefined multiplication and addition \cite{Dar16}. Notice how different various properties of these integrals might be from the Lebesgue case (like the loss of linearity). But nevertheless, Bushell-Okrasi\'nski inequality (or its variants) remain valid. This strengthens its universal character.

\section{Conclusion}
The Bushell-Okrasi\'nski inequality is a little mathematical gem discovered when studying nonlinear integral equations. The wide array of different possible extensions and generalizations indicate that it is a fundamental relation in mathematical analysis. It waited to be found until almost the end of the twentieth century but now sits comfortably within the collection of its older siblings - Chebyshev, H\"older and Jensen inequalities.

\section*{Acknowledgments}
The author would like to thank Prof. David Edmunds for his invaluable comments and remarks concerning the manuscript. 

\bibliography{biblio.bib}
\bibliographystyle{plain}
	
\end{document}